\newtheorem{theorem}{Theorem}[section]
\newtheorem{lemma}[theorem]{Lemma}
\newtheorem{corollary}[theorem]{Corollary}
\newtheorem{proposition}[theorem]{Proposition}
\theoremstyle{definition}
\theoremstyle{remark}
\newtheorem{remark}[theorem]{Remark}
\numberwithin{equation}{section}
\begin{document}

\title[Left-orderability]{Left-orderability and exceptional Dehn surgery on two-bridge knots}

\author{Adam Clay}
\address{CIRGET, Universit\'{e} du Qu\'{e}bec, \'{a} Monter\'{e}al, Case Postale 8888,
Succursale Centre-ville, Montr\'{e}al QC, H3C 3P8.}
\email{aclay@cirget.ca}
\author{Masakazu Teragaito}
\address{Department of Mathematics and Mathematics Education, Hiroshima University,
1-1-1 Kagamiyama, Higashi-hiroshima, Japan 739-8524.}
%    Current address
%\curraddr{Department of Mathematics and Statistics,
%Case Western Reserve University, Cleveland, Ohio 43403}
\email{teragai@hiroshima-u.ac.jp}
%    \thanks will become a 1st page footnote.
\thanks{The first author is partially supported by an NSERC postdoctoral fellowship.
The second author is partially supported by Japan Society for the Promotion of Science,
Grant-in-Aid for Scientific Research (C), 22540088.
}%

%    General info
\subjclass[2010]{Primary 57M25; Secondary 06F15}

%\date{}

%\dedicatory{This paper is dedicated to our authors.}

\keywords{left-ordering, two-bridge knot, Dehn surgery}

\begin{abstract}
We show that any exceptional non-trivial Dehn surgery on a hyperbolic two-bridge knot,
yields a $3$-manifold whose fundamental group is left-orderable.
This gives a new supporting evidence for a conjecture of Boyer, Gordon and Watson.
\end{abstract}

\maketitle

%%%%%%%%%%%%%%%%%%%%%%%%%%%%%%%%%%%%
\section{Introduction}

A group $G$ is \textit{left-orderable\/} if it admits a strict total ordering $<$, which 
is invariant under left-multiplication.
The fundamental groups of many $3$-manifolds, for example, all knot and link groups, are known to be left-orderable.
On the other hand, there are many $3$-manifolds whose fundamental groups are not left-orderable.
Since a left-orderable group is torsion-free, lens spaces provide such typical examples. 
There is a more general notion, called an $L$-space, introduced by Ozsv\'{a}th and Szab\'{o} \cite{OS}
in terms of Heegaard Floer homology.
These include lens spaces, elliptic manifolds, etc.
Recently, Boyer, Gordon and Watson \cite{BGW} conjectured that
a prime, rational homology $3$-sphere is an $L$-space if and only if
its fundamental group is not left-orderable.
This conjecture is verified for a few classes of $3$-manifolds \cite{BGW,G,I}.

In \cite{T}, 
the second author proved that any exceptional non-trivial Dehn surgery on a hyperbolic twist knot
yields a $3$-manifold whose fundamental group is left-orderable.
Since such a twist knot does not admit Dehn surgery yielding an $L$-space,
it gives a supporting evidence for the conjecture of Boyer, Gordon and Watson.

In the present paper, we examine the other hyperbolic two-bridge knots.
According to the classification of exceptional Dehn surgery on hyperbolic two-bridge knots \cite{BW},
it is sufficient to consider the following three cases; twist knots, 
$K[c_1,c_2]$\ ($c_1$ and $c_2$ are even, and $|c_1|, |c_2|>2$),
and $K[c_1,c_2]$\ ($c_1$ is odd, $c_2$ is even, and $|c_1|, |c_2|>2$).
Here, a two-bridge knot $K[c_1,c_2]$ corresponds to
a (subtractive) continued fraction 
\[
[c_1,c_2]^-=\cfrac{1}{c_1
          -\cfrac{1}{c_2}}
\]
in the usual way (\cite{HT}).  See also Section \ref{sec:pre}.
In particular, the double branched cover of the $3$-sphere $S^3$
branched over $K[c_1,c_2]$ is a lens space $L(c_1c_2-1,c_2)$.
The first case was settled in \cite{T}. 
For the second case, the only exceptional non-trivial surgery is $0$-surgery.
The resulting manifold is prime (\cite{Ga2}) and has positive Betti number, so its fundamental group is left-orderable \cite{BRW}.
For the last case, the only exceptional non-trivial surgery has slope $2c_2$, which
yields a toroidal manifold.
We settle this remaining case.

\begin{theorem}\label{thm:main}
Let $K$ be the two-bridge knot corresponding to a \textup{(}subtractive\textup{)} continued fraction $[c_1,c_2]$,
where $c_1$ is odd and $c_2$ is even, and $|c_1|, |c_2|>2$.
Then $2c_2$-surgery on $K$ yields a graph manifold whose fundamental group is left-orderable.
\end{theorem}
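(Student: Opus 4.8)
The plan is to exhibit the graph-manifold structure of the surgered manifold $M = M(2c_2)$ explicitly and then to left-order $\pi_1(M)$ by amalgamating compatible left-orderings of its geometric pieces along the splitting torus.

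First I would pin down the decomposition. Since $2c_2$-surgery is toroidal, $M$ contains an essential torus $T$; combining the classification in \cite{BW} with a direct examination of the surgered complement of $K[c_1,c_2]$, I would show that $T$ cuts $M$ into two Seifert fibered pieces $M_1$ and $M_2$, each fibered over a small base orbifold with boundary (a disk or M\"obius band carrying at most a couple of exceptional fibers), and I would compute their Seifert invariants together with the gluing homeomorphism on $T$ as explicit functions of $c_1$ and $c_2$. By van Kampen's theorem this yields $\pi_1(M) \cong \pi_1(M_1) *_{\pi_1(T)} \pi_1(M_2)$, an amalgamated free product over $\pi_1(T) \cong \mathbb{Z}^2$.

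Each factor is left-orderable for a soft reason: $M_i$ has non-empty torus boundary, so $b_1(M_i) \geq 1 > 0$, and $M_i$ is irreducible, whence $\pi_1(M_i)$ is left-orderable by \cite{BRW} (the same criterion invoked above for the $0$-surgery case). The substance of the argument is therefore the gluing step. Merely knowing that the two factors are left-orderable does not suffice to left-order the amalgam; following the amalgamation criterion of Bludov and Glass, I would need left-orderings $<_1$ and $<_2$ of $\pi_1(M_1)$ and $\pi_1(M_2)$ whose restrictions to the edge group $\pi_1(T)$ coincide and satisfy the additional compatibility needed to extend across the splitting. I would thus study, inside the space of left-orderings of $\pi_1(T)=\mathbb{Z}^2$, exactly which orderings arise as restrictions of left-orderings of $M_1$ and which of $M_2$, and try to locate a common one meeting that hypothesis.

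The main obstacle is precisely this matching problem on $T$. Each Seifert structure equips $\partial M_i$ with a preferred basis consisting of the regular fiber $h_i$ and a section, and because $M$ is a genuine graph manifold the gluing map sends $h_1$ to a curve transverse to $h_2$; hence the orderings of $\mathbb{Z}^2$ that arise most naturally from the two sides — those in which the central fiber subgroup is convex — are forced to disagree, and in any case the quotient orbifold groups carry torsion, so such central-convex orderings of the pieces may fail to exist at all. I expect to circumvent this by working instead with the orderings supplied by the positive first Betti number of each piece and analyzing which directions of $\mathbb{Z}^2$ they realize, the crux being to prove that the two families of realizable boundary orderings share an element that extends over the amalgam. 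Equivalently, one must produce actions of $\pi_1(M_1)$ and $\pi_1(M_2)$ on the line whose restrictions to $\pi_1(T)$ are conjugate, and it is here that the explicit Seifert invariants computed in the first step do the real work.
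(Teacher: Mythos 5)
Your skeleton coincides with the paper's: split the surgered manifold along the essential torus $T$ into the exterior $M_1$ of the $(2,2b_1+1)$-torus knot and a Seifert piece $M_2$ (the twisted $I$-bundle over the Klein bottle glued to a solid torus), where $c_1=2b_1+1$, $c_2=2b_2$; compute $\pi_1(M_1)=\langle a,b: a^2=b^{2b_1+1}\rangle$ with $\mu=b^{-b_1}a$, $h=a^2$, and $\pi_1(M_2)=\langle x,y,z: x^{-1}yx=y^{-1},\, y=z^{b_2}\rangle$ with gluing $\phi(\mu)=y$, $\phi(h)=zx^2$ (the paper does this via the Montesinos trick); then apply Bludov--Glass (Theorem \ref{thm:BG}) with normal families on each side. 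Your plan is also exactly right on the $M_2$ side: the surjection $\pi_1(M_2)\to\mathbb{Z}$, $x\mapsto 1$, $y,z\mapsto 0$, fed into a lexicographic construction (Lemma \ref{lem:restrict}), gives a normal family whose members restrict to $\pi_1(T)=\langle y,zx^2\rangle$ with $zx^2$ dominant.

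The genuine gap is your crux claim that Betti-number orderings on \emph{both} pieces realize a common boundary ordering; on the $M_1$ side this provably fails. Compatibility for an amalgam over $\mathbb{Z}^2$ forces the restriction of each ordering in $L_1$ to $\pi_1(T)$, pushed forward by $\phi$, to coincide \emph{exactly} with the restriction of some ordering in $L_2$. Now $H_1(M_1)\cong\mathbb{Z}$ with $[\mu]=1$ and $[h]=2(2b_1+1)$, and every homomorphism $\pi_1(M_1)\to\mathbb{Z}$ is a multiple of this one; so every ordering of $\pi_1(M_1)$ built lexicographically from positive first Betti number decides positivity of $\mu^r h^s$ first by the sign of $r+2(2b_1+1)s$. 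In particular $\mu^{4b_1+3}h^{-1}$ is positive in every such ordering. On the other side, $x^{-1}yx=y^{-1}$ gives $2[y]=0$ in $H_1(M_2)$, and $[y]=b_2[z]$ then makes $[z]$ torsion as well, so every homomorphism $\pi_1(M_2)\to\mathbb{Z}$ kills $y$ and $z$; hence every Betti-number ordering of $\pi_1(M_2)$ restricts to $\langle y,zx^2\rangle$ as a lexicographic ordering in which the sign of $(zx^2)^s$ decides first, and $y^{4b_1+3}(zx^2)^{-1}=\phi(\mu^{4b_1+3}h^{-1})$ is negative (or, for the opposite choice of the quotient ordering, $zx^2$ itself is negative while $h$ is positive upstairs). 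The two sets of realizable boundary positive cones are half-planes with different boundary lines, so they are disjoint and no pairing satisfies the Bludov--Glass condition. Your observation that fiber-convex orderings cannot exist on $M_1$ is correct ($\pi_1(M_1)/\langle h\rangle\cong\mathbb{Z}/2 * \mathbb{Z}/(2b_1+1)$ has torsion), but the right response is not to retreat to homological orderings: what is needed is a normal family in which the fiber $h$ is \emph{cofinal} on the boundary, not convex. This is the paper's key ingredient, which your proposal lacks: $\pi_1(M_1)\cong\langle b,c: b=cb^{2b_1}c\rangle$ is Navas's group $\Gamma_{2b_1}$, and Lemma \ref{lem:navas} (imported from the earlier twist-knot paper) shows that every conjugate of Navas's ordering restricts to $\langle\mu,h\rangle$ lexicographically with $h$ dominant and the sign of the subordinate $\mu$ controlled. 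That $h$-dominant family matches the $zx^2$-dominant family on $M_2$ under $\phi$, which is how the proof closes; without Navas's orderings, or some substitute producing fiber-cofinal normal families on the torus knot exterior, your matching step cannot be completed.
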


Hence this immediately implies the following.

\begin{corollary}\label{cor}
Let $K$ be a hyperbolic two-bridge knot.
Then any exceptional non-trivial Dehn surgery on $K$ yields
a $3$-manifold whose fundamental group is left-orderable.
\end{corollary}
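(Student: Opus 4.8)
The plan is to deduce Corollary~\ref{cor} by assembling the case analysis sketched in the introduction, with Theorem~\ref{thm:main} supplying the one genuinely new ingredient. By the Brittenham--Wu classification of exceptional Dehn surgeries on hyperbolic two-bridge knots \cite{BW}, every such knot admitting a non-trivial exceptional filling is, up to the usual conventions, one of the $K[c_1,c_2]$ in exactly three families: the twist knots; the knots with $c_1,c_2$ both even and $|c_1|,|c_2|>2$; and the knots with $c_1$ odd, $c_2$ even and $|c_1|,|c_2|>2$. First I would record that the first family is already settled in \cite{T}, where every exceptional non-trivial surgery yields a $3$-manifold with left-orderable fundamental group. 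Next, for the second family the only exceptional non-trivial slope is $0$; the resulting manifold is prime by \cite{Ga2} and has positive first Betti number, so its fundamental group is left-orderable by the criterion of Boyer--Rolfsen--Wiest \cite{BRW}. Finally, the third family has a single exceptional non-trivial slope, namely $2c_2$, and Theorem~\ref{thm:main} --- which I am free to assume --- asserts exactly that $2c_2$-surgery produces a graph manifold with left-orderable fundamental group. Collating the three cases proves the corollary, so all of the real work sits in Theorem~\ref{thm:main}, whose proof I now outline since that is where any genuine obstacle lies.

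For Theorem~\ref{thm:main}, the first step is to obtain an explicit presentation of $G=\pi_1(M)$, where $M$ denotes $2c_2$-surgery on $K=K[c_1,c_2]$, starting from a two-bridge presentation of the knot group and adjoining the surgery relator. I would then pin down the promised graph-manifold structure geometrically: locate the essential torus (tori), i.e.\ the JSJ decomposition, cut $M$ along it, and recognize each resulting piece as a Seifert fibered space over a base orbifold with non-empty boundary. This realizes $G$ as the fundamental group of a graph of groups whose vertex groups $G_i$ are the Seifert piece groups and whose edge groups are the copies of $\mathbb{Z}^2$ carried by the cutting tori. The parity hypotheses ($c_1$ odd, $c_2$ even) and the specific slope $2c_2$ would enter here, determining the Seifert invariants of the pieces and the gluing matrices along the tori.

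With this structure in hand, I would verify that each vertex group is left-orderable. Each $G_i$ is the fundamental group of a Seifert fibered space over a base orbifold with boundary; such groups are torsion-free, contain the infinite-cyclic fiber as a central (hence automatically order-compatible) subgroup, and are left-orderable by the Seifert-fibered criteria of Boyer--Rolfsen--Wiest \cite{BRW}. To pass from the pieces to $G$ I would then invoke a gluing result for left-orderings (in the spirit of Vinogradov and Bludov--Glass): the amalgam over $\mathbb{Z}^2$ is left-orderable provided one can choose left-orderings of the $G_i$ that restrict to \emph{the same} left-ordering on each edge group $\mathbb{Z}^2$ and are compatible with the gluing. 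Equivalently, and more robustly, I would try to build a single faithful orientation-preserving action $G\hookrightarrow \mathrm{Homeo}_+(\mathbb{R})$ by assembling dynamical actions of the Seifert pieces that agree across each cutting torus, using the fact that a countable group is left-orderable precisely when it admits such an action.

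The main obstacle I anticipate is exactly this \emph{matching across the JSJ torus}: each Seifert piece orders its peripheral $\mathbb{Z}^2$ naturally along its own fiber/base splitting, but the two fiberings meet the torus in different slopes, so a single ordering of the torus group realizable simultaneously from both sides need not exist for purely formal reasons. Overcoming it should reduce to an explicit computation with the gluing matrix --- governed by $c_1$, $c_2$ and the slope $2c_2$ --- showing that a common ordering of the torus subgroup can be selected and extended coherently on both sides, or else to writing down the $\mathrm{Homeo}_+(\mathbb{R})$-action directly and checking by hand that every relator of the presentation is satisfied. I expect the parity conditions to be precisely what guarantees the gluing data is orientable and compatible, making this final assembly go through.
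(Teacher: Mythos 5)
Your deduction of the corollary is exactly the paper's: the Brittenham--Wu classification reduces to three families, with twist knots handled by \cite{T}, the even--even case by $0$-surgery being prime \cite{Ga2} with positive Betti number \cite{BRW}, and the odd--even case by Theorem~\ref{thm:main}, which you are entitled to assume. Your additional sketch of Theorem~\ref{thm:main} is not needed for the corollary, but for what it is worth it correctly anticipates the paper's actual strategy (decompose along the essential torus into Seifert pieces and glue left-orderings via a Bludov--Glass compatibility argument).
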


We would expect that any non-trivial Dehn surgery on a hyperbolic 
two-bridge knot yields a 3-manifold whose fundamental group is 
left-orderable, but this is still a challenging problem.

%%%
\section{$L$-space surgery}\label{sec:pre}

Let $K$ be the two-bridge knot corresponding to $[c_1,c_2]$, satisfying the assumption of Theorem
\ref{thm:main}.  Set $c_1=2b_1+1$ and $c_2=2b_2$.
We can assume that $c_1>0$, so $b_1\ge 1$, and $|b_2|\ge 2$.
In Figure \ref{fig:knot}, a rectangular box means half-twists with indicated numbers.
They are right-handed if the number is positive, left-handed, otherwise.

\begin{figure}[ht]
\includegraphics[scale=0.6]{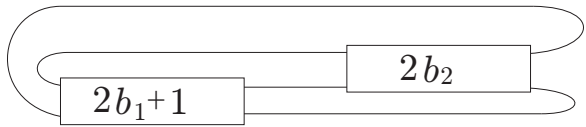}
\caption{The two-bridge knot $K[2b_1+1,2b_2]$}\label{fig:knot}
\end{figure}

Since $2$-bridge knots are alternating (\cite{Go}), we can invoke Theorem 1.5 of \cite{OS} to conclude that $2c_2$-surgery on $K$ does not yield and $L$-space.  However, we can argue this fact directly as follows.

\begin{lemma}\label{lem:fibered}
The knot $K=K[2b_1+1,2b_2]$ is fibered if and only if $b_1=1$ and $b_2>0$.
\end{lemma}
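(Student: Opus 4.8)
The plan is to compute the Alexander polynomial of $K=K[2b_1+1,2b_2]$ and use the fact that a two-bridge knot is fibered precisely when its Alexander polynomial is monic (leading coefficient $\pm 1$). Two-bridge knots are alternating, hence their Alexander polynomials have the property that fiberedness is detected by monicity of the (normalized) Alexander polynomial; more precisely, a two-bridge knot is fibered if and only if the coefficients of its Alexander polynomial are all $\pm 1$ (this follows from the continued-fraction / Burde–Zieschang criterion for two-bridge knots). So I would reduce the lemma to an arithmetic statement about the Alexander polynomial's coefficients.

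Let me think through this more carefully.

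First I would recall the standard description of the two-bridge knot $K[c_1,c_2]$ from the continued fraction $[c_1,c_2]^-$, which gives the double branched cover $L(c_1c_2-1,c_2)$ and hence the knot determinant $|c_1c_2-1|$. The Alexander polynomial of a two-bridge knot can be computed from the continued fraction data, and for $K[2b_1+1,2b_2]$ it should come out as an explicit polynomial in $t$ whose coefficients depend on $b_1$ and $b_2$.

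Let me reconstruct the Alexander polynomial. For two-bridge knots there's a clean formula.

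The plan is to decide fiberedness through the Alexander polynomial, using the classical fact that a two-bridge knot, being alternating, is fibered if and only if its Alexander polynomial $\Delta_K(t)$ is monic (leading coefficient $\pm1$). The forward implication is automatic; for the converse one uses that the Seifert surface coming from the alternating diagram---equivalently, from the even continued fraction expansion---is of minimal genus, so that a non-monic $\Delta_K$ obstructs fiberedness. This reduces the lemma to an arithmetic statement about the extreme coefficients of $\Delta_K$.

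First I would rewrite the subtractive fraction $[2b_1+1,2b_2]^-$ as an \emph{all-even} continued fraction, since the even expansion is the one adapted to the plumbed Seifert surface. A direct computation (or a short induction on $|b_2|$) gives the expansion: for $b_2>0$ its first entry is $2b_1$, followed by $2b_2-1$ copies of $-2$; for $b_2<0$ its first entry is $2b_1+2$, followed by $2|b_2|-1$ copies of $+2$. This sign split on $b_2$ already foreshadows the asymmetry in the statement and is the first place where care is needed.

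Next, from an even continued fraction $[2a_1,2a_2,\dots,2a_{2g}]^-$ I would write down the standard Seifert matrix $V$ of the associated surface (a linear plumbing of twisted bands), which is triangular with diagonal $(a_1,\dots,a_{2g})$ and hence has $\det V=\prod_i a_i$. Since $\Delta_K(t)\doteq\det(V-tV^{\mathsf T})$, the extreme coefficients of $\Delta_K$ are $\pm\det V$, so $\Delta_K$ is monic exactly when $|\prod_i a_i|=1$, that is, when every $a_i=\pm1$ (geometrically, when the surface is a plumbing of Hopf bands). In the two cases above all entries except the first are already $\pm2$, so $\prod_i a_i=\pm b_1$ when $b_2>0$ and $\prod_i a_i=\pm(b_1+1)$ when $b_2<0$.

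Finally I would combine these observations. Recalling $b_1\ge1$, the product has absolute value $1$ precisely when $b_1=1$ in the case $b_2>0$, and never when $b_2<0$ (since then $b_1+1\ge2$). Hence $\Delta_K$ is monic, and $K$ is fibered, exactly when $b_1=1$ and $b_2>0$, as claimed. The main obstacle I anticipate is purely bookkeeping: pinning down the even continued fraction and the resulting Seifert matrix with the correct signs, since an error there would interchange the two cases. The underlying topology---fibered $\Leftrightarrow$ monic for two-bridge knots, together with the triangular form of $V$---is standard once the algebra is set up correctly.
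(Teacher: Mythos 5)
Your proof is essentially correct, but it takes a genuinely different route from the paper. Both arguments begin identically, by rewriting $[2b_1+1,2b_2]^-$ as the all-even continued fraction ($[2b_1,-2,\dots,-2]^-$ with $2b_2-1$ entries $-2$ when $b_2>0$, and $[2b_1+2,2,\dots,2]^-$ when $b_2<0$) and recognizing the associated Seifert surface as a linear plumbing of twisted annuli. The paper then finishes geometrically: by Gabai's theorem that a Murasugi sum is a fiber surface if and only if each summand is, the plumbing fibers exactly when every annulus is a Hopf band, i.e.\ every entry is $\pm 2$, which forces $b_1=1$ and $b_2>0$. You instead finish algebraically: the Seifert matrix of the plumbing is triangular with diagonal $(a_1,\dots,a_m)$, so the extreme coefficients of $\Delta_K$ are $\pm\prod_i a_i$, and monicity of $\Delta_K$ is equivalent to all $a_i=\pm1$; combined with the criterion that a two-bridge (alternating) knot is fibered if and only if $\Delta_K$ is monic, this gives the same arithmetic conclusion. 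Your route trades Gabai's geometric theorem for Murasugi's fiberedness criterion for alternating links; it is more computational but equally legitimate, and it has the side benefit of exhibiting the leading coefficient $b_1$ (resp.\ $b_1+1$) explicitly.

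Two flaws to repair, neither fatal. First, your parenthetical sharpening ``a two-bridge knot is fibered if and only if the coefficients of its Alexander polynomial are all $\pm 1$'' is false: the figure-eight knot is a fibered two-bridge knot with $\Delta(t)=t^2-3t+1$. Only monicity of the leading coefficient is true and is all your argument uses, so simply delete that remark. Second, your justification of the hard direction (monic $\Rightarrow$ fibered) is circular as worded: saying ``a non-monic $\Delta_K$ obstructs fiberedness'' is just the contrapositive of the easy direction (fibered $\Rightarrow$ monic) and proves nothing about the converse. The converse genuinely requires a theorem --- Murasugi's result that an alternating link is fibered if and only if its Alexander polynomial is monic, or equivalently the Burde--Zieschang classification of fibered two-bridge knots --- and you should cite it as such rather than sketch a non-proof; with that citation in place your argument is complete.
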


\begin{proof}
We have
\begin{equation*}
[2b_1+1,2b_2]^-=
\begin{cases}
[2b_1,\underbrace{-2,-2,\dots,-2}_{2b_2-1}]^- &\text{if $b_2>0$},\\
[2b_1+2,\underbrace{2,2,\dots,2}_{-2b_2-1}]^- &\text{if $b_2<0$}.
\end{cases}
\end{equation*}
This implies that a minimal genus Seifert surface of $K$ is
obtained by plumbing a single $2b_1$-twisted, or $(2b_1+2)$-twisted, annulus
with Hopf bands.
Then the conclusion immediately follows from \cite{Ga}.
\end{proof}

Recall that a rational homology $3$-sphere $Y$ is an \textit{$L$-space} if
its Heegaard Floer homology $\widehat{HF}(Y)$ has
rank equal to $|H_1(Y;\mathbb{Z})|$.

\begin{proposition}\label{prop:noL}
The knot $K$ does not admit an $L$-space surgery.
\end{proposition}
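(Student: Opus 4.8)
The plan is to
deduce Proposition \ref{prop:noL} from the structure of $L$-space knots
established by Ozsv\'{a}th and Szab\'{o}. Recall their fundamental result:
if a knot $K$ in $S^3$ admits a positive $L$-space surgery, then $K$ is
fibered (this is the theorem of Ozsv\'{a}th--Szab\'{o}, later sharpened by
Ghiggini and Ni). Symmetrically, if $K$ admits a negative $L$-space
surgery, then the mirror image $\overline{K}$ admits a positive one, and
is therefore fibered; but fiberedness is a mirror-invariant property, so
$K$ is fibered in either case. Thus the essential point is that
\emph{any} $L$-space surgery on a nontrivial knot forces the knot to be
fibered.

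With that reduction in hand, the argument is short. First I would invoke
Lemma \ref{lem:fibered}, which tells us precisely when
$K=K[2b_1+1,2b_2]$ is fibered, namely exactly when $b_1=1$ and $b_2>0$.
So the only candidates for admitting an $L$-space surgery among our
knots are those with $b_1=1$ and $b_2>0$. The remaining task is to rule
out these finitely-parametrized candidates. For these, I would use a
second obstruction coming from the Alexander polynomial: an $L$-space
knot has an Alexander polynomial all of whose nonzero coefficients are
$\pm 1$ and alternate in sign, with the top coefficient equal to $+1$
(again by Ozsv\'{a}th--Szab\'{o}). Since $K$ is the two-bridge knot
$K[2b_1+1,2b_2]$, its Alexander polynomial is computable from the
continued fraction data, and for $b_1=1$, $b_2>0$ one checks directly
that the coefficients fail to satisfy this alternating $\pm1$ pattern
(indeed these knots, being certain double twist knots, have coefficients
whose absolute values exceed $1$ or whose signs do not alternate).

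I expect the main obstacle to be the second step: handling the fibered
candidates $b_1=1$, $b_2>0$ cleanly. One must either carry out the
Alexander polynomial computation for this one-parameter family and verify
the coefficient condition fails for every admissible $b_2$, or instead
argue via the genus and the known relationship between the degree of the
Alexander polynomial and the Seifert genus of an $L$-space knot together
with the plumbing description from the proof of Lemma
\ref{lem:fibered}. An alternative, and perhaps cleaner, route that
avoids delicate polynomial bookkeeping is to appeal directly to the
alternating-knot criterion already noted in the text: since two-bridge
knots are alternating, Theorem 1.5 of \cite{OS} shows that the only
alternating knots admitting $L$-space surgeries are the $(2,n)$ torus
knots. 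One then simply checks that $K[2b_1+1,2b_2]$ with $|c_1|,|c_2|>2$
is never a $(2,n)$ torus knot, which is immediate from its double
branched cover being the lens space $L(c_1c_2-1,c_2)$ with $c_1c_2-1$ not
of the requisite form, or from the genus computation above showing its
Seifert genus is too large relative to its crossing data. I would prefer
this alternating-knot approach as the primary argument and relegate the
Alexander polynomial computation to a remark.
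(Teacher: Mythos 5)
Your preferred route---two-bridge knots are alternating, so Theorem 1.5 of \cite{OS} forces any knot in our family with an $L$-space surgery to be a $(2,n)$ torus knot, which $K[c_1,c_2]$ with $|c_1|,|c_2|>2$ is not---is sound, but it is exactly the argument the paper acknowledges and then deliberately sets aside: right before Lemma \ref{lem:fibered} the authors write that one can invoke Theorem 1.5 of \cite{OS}, ``however, we can argue this fact directly as follows.'' The non-torus-knot check you sketch does work: $T(2,n)$ is the two-bridge knot with double branched cover $L(n,1)$, whereas $K$ has double branched cover $L(c_1c_2-1,c_2)$ with $1<|c_2|<|c_1c_2-1|-1$, and a two-bridge knot is determined by its double branched cover; even more simply, the knots in question are hyperbolic, hence not torus knots. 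What the paper's direct argument buys, by contrast, is independence from the alternating-knot machinery: it uses only Ni's fiberedness theorem \cite{N}, the elementary genus computation of Lemma \ref{lem:fibered}, and the Alexander polynomial constraint of \cite{OS}.

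Your first route, which is structurally the paper's actual proof, has a genuine gap at precisely the point you flag as the ``main obstacle.'' You assert that for $b_1=1$, $b_2>0$ ``one checks directly'' that the coefficients of $\Delta_K$ violate the alternating $\pm 1$ pattern, but the check is never carried out; and your proposed fallback---comparing the degree of the Alexander polynomial with the Seifert genus---cannot close the argument, because these knots \emph{are} fibered of genus $b_2$, so the degree equals $2b_2$ and no contradiction comes from the degree alone. The missing idea is to evaluate at $t=-1$: the double branched cover of $K$ is $L(6b_2-1,2b_2)$, so the determinant is $|\Delta_K(-1)|=6b_2-1$, while the $L$-space form $\Delta_K(t)=(-1)^k+\sum_{j=1}^k(-1)^{k-j}(t^{n_j}+t^{-n_j})$ together with fiberedness (which gives $n_k=g(K)=b_2$) forces $|\Delta_K(-1)|\le 2k+1\le 2n_k+1=2b_2+1$, and $6b_2-1\le 2b_2+1$ is absurd. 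Without the determinant, or some equally quantitative invariant extracted from the lens space double cover, your first route does not go through.
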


\begin{proof}
By \cite{N}, if $K$ is not fibered, then $K$ does not admit an $L$-space surgery.
Hence it is sufficient to examine the case where $b_1=1$ and $b_2>0$ by Lemma \ref{lem:fibered}.
Then, as seen in the proof of Lemma \ref{lem:fibered},
$K$ has genus $b_2$.

On the other hand,
the double branched cover of $S^3$ branched over $K$ is a lens space $L(6b_2-1,2b_2)$.
Hence the determinant $|\Delta_K(-1)|$ of $K$ equals to $6b_2-1$, where $\Delta_K(t)$ is the Alexander polynomial of $K$.

Suppose that $K$ admits an $L$-space surgery.
Then $\Delta_K(t)$ has a form of
\[
\Delta_K(t)=(-1)^k+\sum_{j=1}^k (-1)^{k-j}(t^{n_j}+t^{-n_j})
\]
for some sequence of positive integers $0<n_1<n_2<\cdots <n_k$ by \cite{OS}.
Since $K$ is fibered, its genus equals to $n_k$.
Thus $|\Delta_K(-1)|\le 2k+1\le 2n_k+1$.
Hence, $6b_2-1\le 2b_2+1$, a contradiction.
\end{proof}

%%%%%
\section{Fundamental group}

By using the Montesinos trick (\cite{M}), we will examine the structure of the resulting manifold
by $4b_2$-surgery on $K=K[2b_1+1,2b_2]$ to obtain a presentation of its fundamental group.

First, put the knot $K$ in a symmetric position as illustrated in Figure \ref{fig:symmetric}.
\begin{figure}[ht]
\includegraphics[scale=0.7]{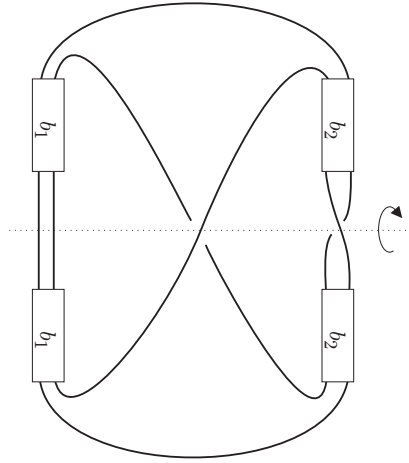}
\caption{$K$ in a symmetric position}\label{fig:symmetric}
\end{figure}
By taking a quotient under the involution, whose axis is indicated by a dotted line there,
we obtain a $2$-string tangle $\mathcal{T}$, which is drawn as the outside
of a small circle,  in Figure \ref{fig:monte1}.

\begin{figure}[htbp]
\includegraphics[scale=0.7]{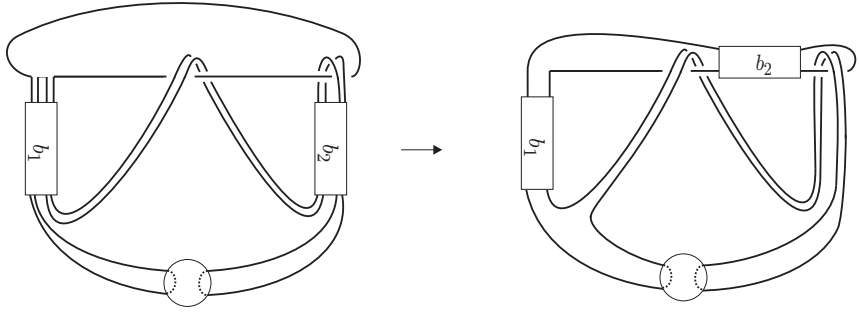}
\caption{Montesinos trick}\label{fig:monte1}
\end{figure}

If the $\infty$-tangle, which is indicated there, is filled into the small circle, then
we obtain a trivial knot.
This means that the double branched cover of the tangle $\mathcal{T}$
recovers the exterior of $K$.
We chose the framing so that the $0$-tangle filling corresponds to
$4b_2$-surgery on $K$ upstairs.
Figure \ref{fig:monte2} shows the resulting link by filling the $0$-tangle.
The link admits an essential Conway sphere $S$ depicted there.

\begin{figure}[htbp]
\includegraphics[scale=0.7]{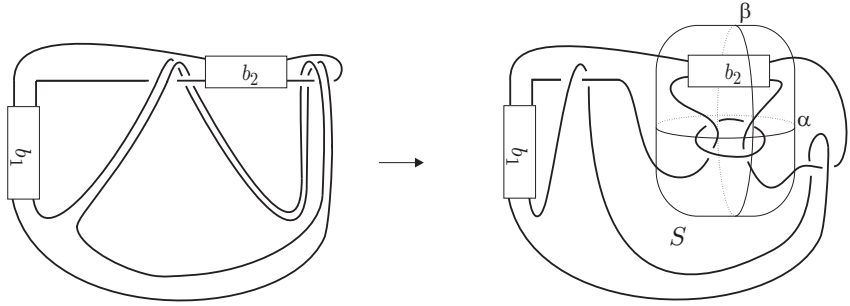}
\caption{The link after $0$-filling}\label{fig:monte2}
\end{figure}

Let $\mathcal{T}_1=(B_1,t_1)$ and $\mathcal{T}_2=(B_2,t_2)$ be the tangles defined by $S$, that
are located outside and inside of $S$, respectively.
Here, $t_1$ consists of two arcs, but $t_2$ consists of two arcs and a single loop.
Also, let $M_i$ be the double branched cover of the $3$-ball $B_i$ branched over $t_i$. 

\begin{lemma}
\label{lem:decompose}
\begin{enumerate} \noindent \item $M_1$ is the exterior of the torus knot of type $(2,2b_1+1)$.
The loops $\alpha$ and $\beta$ on $S$ lift to a meridian $\mu$ and a regular fiber $h$
of the exterior \textup{(}with the unique Seifert fibration\textup{)}, respectively.
\item $M_2$ is the union of the twisted $I$-bundle $KI$ over the Klein bottle and the
cable space $C$ of type $(b_2,1)$.
The loop $\alpha$ lifts to a regular fiber of the cable space $C$ with the unique Seifert fibration, and
a regular fiber of $KI$ with a Seifert fibration over the M\"{o}bius band.
\end{enumerate}
\end{lemma}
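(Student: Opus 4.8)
The plan is to read the two branched covers $M_1$ and $M_2$ directly off the pictures of the tangles $(B_i,t_i)$ cut out by the Conway sphere $S$ in Figure \ref{fig:monte2}, using the standard dictionary between tangle operations and double branched covers. The common starting point is that $S$ meets the branch set in four points, so by Riemann--Hurwitz its double branched cover is a torus; this torus is $\partial M_1=\partial M_2$, and any simple closed curve on $S$ missing the four points lifts to a simple closed curve on it. I would fix once and for all the three standard slopes on $S$ and record which curves of the covering torus they lift to, so that the assertions about the lifts of $\alpha$ and $\beta$ reduce to slope bookkeeping.

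For part (1), the two arcs $t_1$ in $B_1$ form a Montesinos tangle, and I would first isotope them into a standard form exhibiting a $2$-twist region and a $(2b_1+1)$-twist region. The double branched cover of such a tangle is Seifert fibered over the disk with two exceptional fibers of orders $2$ and $2b_1+1$, which is precisely the exterior of the $(2,2b_1+1)$-torus knot; the fibration is unique because the base orbifold is not one of the small exceptional types. To finish, I would identify the regular fiber $h$ and the meridian $\mu$ on $\partial M_1$ with the lifts of the standard slopes on $S$, and check that $\alpha$ is the slope lifting to $\mu$ while $\beta$ is the slope lifting to $h$.

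For part (2), the branch set $t_2$ contains a closed loop in addition to the two arcs, and that loop is what forces a non-orientable piece into the cover. I would first locate a disk or annulus in $(B_2,t_2)$ separating the loop-and-arcs region from the twisted region produced by the $2b_2$ full twists; this surface lifts to the torus $T'$ along which $M_2=KI\cup_{T'}C$ splits. The twisted region gives, after taking the double cover, the cable space $C$ of type $(b_2,1)$, Seifert fibered over the annulus with a single exceptional fiber, and $\alpha$ is read off as its regular fiber on $\partial M_2$. The loop-and-arcs piece is then recognized as the double branched cover yielding the twisted $I$-bundle $KI$ over the Klein bottle, and the Seifert fibration of $C$ extends across $T'$ to the fibration of $KI$ over the M\"obius band, so that $\alpha$ is simultaneously a regular fiber of both.

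The main obstacle is part (2): correctly identifying the piece of the cover arising from the closed loop as the twisted $I$-bundle over the Klein bottle, and verifying that the Seifert fibrations of $C$ and of $KI$ match along $T'$ with common fiber $\alpha$. The double cover branched over an unknotted closed component is exactly where the non-orientability enters, and the delicate point is to keep track of the two distinct fibrations of $KI$ (over the disk with two order-$2$ fibers versus over the M\"obius band) and to confirm that the gluing forces the M\"obius-band one. Everything else should reduce to routine tangle isotopies and slope computations on the covering torus.
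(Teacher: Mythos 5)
Your overall skeleton (double branched covers of the two tangles cut out by $S$, identification of each piece as a Seifert or graph piece, slope tracking on the covering torus) is the paper's, but two of your steps have genuine gaps. In part (1), you conclude that $M_1$ is the $(2,2b_1+1)$-torus-knot exterior from the fact that it is Seifert fibered over the disk with two exceptional fibers of orders $2$ and $2b_1+1$. The orders alone do not determine the manifold: spaces with the same orders but different Seifert invariants $\beta_i$ are non-homeomorphic, and most are not knot exteriors in $S^3$ at all (for orders $2$ and $7$, the space with unnormalized invariants $(2,1),(7,1)$ admits no Dehn filling equal to $S^3$, while $(2,1),(7,3)$ is the $(2,7)$-torus-knot exterior). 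So you must either carry the full Montesinos data (the tangle fractions, not just the twist counts) through the covering computation, or argue as the paper does: fill $\mathcal{T}_1$ with the rational tangle of Figure \ref{fig:torusknot} that produces the trivial knot, so the double branched cover of the filled tangle is $S^3$, and the core $\gamma$ of the filling lifts to the $(2,2b_1+1)$-torus knot, exhibiting $M_1$ as its exterior. The same issue undermines your claim that ``slope bookkeeping'' identifies the lift of $\alpha$ with $\mu$: the meridian is not visible in the intrinsic Seifert structure (it is characterized extrinsically as the unique slope whose filling gives $S^3$), and it is exactly the filling trick that shows $\alpha$ lifts to $\mu$. Your identification of $\beta$ with the fiber $h$ via the Montesinos structure is fine, and is what the paper does.

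In part (2), the splitting surface cannot be ``a disk or annulus.'' The preimage of a properly embedded surface with boundary again has boundary, so it is never the closed torus $T'$; concretely, a disk meeting $t_2$ in two points lifts to an annulus, a disk meeting it in four points lifts to a genus-one surface with two boundary circles, and an annulus lifts to annuli or to a once-punctured torus, never to a torus. The decomposing torus of $M_2$ projects to a \emph{closed} surface meeting $t_2$ in four points, i.e., a second Conway sphere; this is the sphere $P$ of Figure \ref{fig:ki}, whose inside is a Montesinos tangle with double branched cover $KI$ and whose outside lifts to the cable space $C$. Once $P$ replaces your disk/annulus, the rest of your part (2) --- that the unique fibration of $C$ matches the M\"{o}bius-band fibration of $KI$ along $T'$, so the lift of $\alpha$ is a regular fiber of both (cf.\ Remark \ref{rem:str}) --- is correct and parallel to the paper's argument.
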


\begin{proof}
(1) By filling $\mathcal{T}_1$ with a rational tangle as in Figure \ref{fig:torusknot},
we obtain a trivial knot.
Then the core $\gamma$ of the filled rational tangle lifts to the torus knot of type $(2,2b_1+1)$.
This shows that $M_1$ is the exterior of the torus knot of type $(2,2b_1+1)$, and
$\alpha$ lifts to a meridian.

On the other hand, $\mathcal{T}_1$ is a Montesinos tangle whose double branched cover is
a Seifert fibered manifold over the disk with two exceptional fibers.
Moreover, $\beta$ lifts to a regular fiber (see \cite{BZ}).

\begin{figure}[htbp]
\includegraphics[scale=0.7]{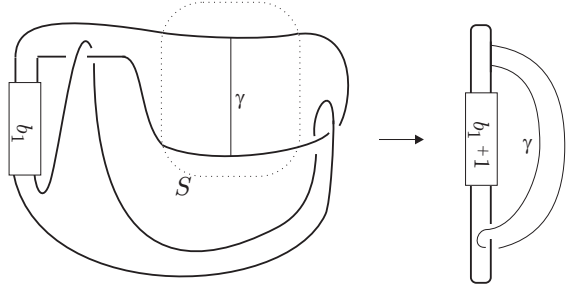}
\caption{$\mathcal{T}_1$ filled with a rational tangle}\label{fig:torusknot}
\end{figure}

(2) For $\mathcal{T}_2$, 
there is another essential Conway sphere $P$ as illustrated in Figure \ref{fig:ki}.

\begin{figure}[htbp]
\includegraphics[scale=0.8]{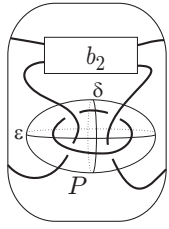}
\caption{Conway sphere $P$ in $\mathcal{T}_2$}\label{fig:ki}
\end{figure}

The inside of $P$ is a Montesinos tangle, whose double branched cover is
the twisted $I$-bundle $KI$ over the Klein bottle.
It is well known that $KI$ admits two Seifert fibrations; one over the disk
with two exceptional fibers, the other over the M\"{o}bius band with no exceptional fiber.
In fact, the loop $\delta$ (resp.~$\varepsilon$) on $P$ lifts
to a regular fiber of the former (resp.~latter) fibration.

The outside of $P$ lifts to the cable space of type $(b_2,1)$,
where $\alpha$ lifts to a regular fiber with respect to its unique fibration.
\end{proof}

\begin{remark}\label{rem:str}
In fact, $M_2$ admits a Seifert fibration over the M\"{o}bius band with
one exceptional fiber of index $|b_2|$.
Also, $M_2$ can be obtained by attaching a solid torus $J$ to the twisted $I$-bundle $KI$ over the
Klein bottle along annuli on their boundaries so that a regular fiber on $\partial (KI)$, with a Seifert fibration
over the M\"{o}bius band, runs $|b_2|$ times along a core of $J$.
\end{remark}

%%%%%%%%
\begin{lemma}\label{lem:each}
For $M_1$, the fundamental group has a presentation
$\pi_1(M_1)=\langle a, b : a^2=b^{2b_1+1}\rangle$, with
a meridian $\mu=b^{-b_1}a$ and a regular fiber $h=a^2=b^{2b_1+1}$.
Also,
$\pi_1(M_2)=\langle x, y, z : x^{-1}yx=y^{-1}, y=z^{b_2}\rangle$.
\end{lemma}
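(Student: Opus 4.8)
The plan is to treat the two pieces separately, using the geometric identifications already supplied by Lemma~\ref{lem:decompose} and Remark~\ref{rem:str}, and to read off each presentation from the Seifert structure together with van Kampen's theorem.

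For $M_1$, I would start from the fact, recorded in Lemma~\ref{lem:decompose}, that $M_1$ is the exterior of the $(2,2b_1+1)$ torus knot, hence Seifert fibered over the disk with two exceptional fibers of orders $2$ and $2b_1+1$. The standard presentation of such a Seifert fibered space is $\langle a,b : a^2=b^{2b_1+1}\rangle$, in which the central element $a^2=b^{2b_1+1}$ is precisely the regular fiber $h$; this is consistent with the identification of $\beta$ with $h$ in Lemma~\ref{lem:decompose}. It then remains to express the meridian in these generators. I would pin down $\mu=b^{-b_1}a$ by checking consistency with the abelianization: since $H_1(M_1;\mathbb{Z})\cong\mathbb{Z}$ is generated by the meridian, and the relation $a^2=b^{2b_1+1}$ forces $a\mapsto 2b_1+1$ and $b\mapsto 2$ under a suitable identification $H_1\cong\mathbb{Z}$, one computes $b^{-b_1}a\mapsto -2b_1+(2b_1+1)=1$, so that $b^{-b_1}a$ represents a generator. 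Combined with the standard torus-knot meridian computation, this gives the claimed word.

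For $M_2$, I would use the decomposition from Remark~\ref{rem:str}, writing $M_2=KI\cup J$, where $KI$ is the twisted $I$-bundle over the Klein bottle, $J$ is a solid torus, and they are glued along an annulus $A$ on their boundaries. Since $KI$ deformation retracts onto the Klein bottle, $\pi_1(KI)=\langle x,y : x^{-1}yx=y^{-1}\rangle$; here I would take $y$ to be the regular fiber of the Seifert fibration over the M\"{o}bius band and $x$ to be the core of the M\"{o}bius band, so that the orientation-reversing loop $x$ conjugates the fiber $y$ to $y^{-1}$, which is exactly the stated relation. Writing $\pi_1(J)=\langle z\rangle$ with $z$ a core of $J$, I would then apply van Kampen along $A$ (with $\pi_1(A)\cong\mathbb{Z}$): the core of $A$ equals the regular fiber $y$ on the $KI$ side, while on the $J$ side it runs $|b_2|$ times along the core, i.e.\ equals $z^{b_2}$ after a suitable choice of orientation. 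The amalgamation therefore imposes $y=z^{b_2}$, yielding $\pi_1(M_2)=\langle x,y,z : x^{-1}yx=y^{-1},\ y=z^{b_2}\rangle$.

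The main obstacle I anticipate is bookkeeping rather than conceptual: correctly identifying which curve on $\partial(KI)$ is the M\"{o}bius-band fiber and verifying that the monodromy inverts it, so that the relation is $x^{-1}yx=y^{-1}$ and not something else, and nailing down the exact meridian word $\mu=b^{-b_1}a$ with the right framing and sign conventions. Both points hinge on tracking the lifts of $\alpha,\beta$ (and of $\delta,\varepsilon$) through the double branched covers of Lemma~\ref{lem:decompose}, and the abelianization check above serves as a useful consistency guard against sign errors.
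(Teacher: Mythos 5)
Your proposal is correct and follows essentially the same route as the paper: for $M_1$ the paper likewise rests on the standard torus-knot presentation and meridian word (citing Burde--Zieschang, where your abelianization computation is a useful consistency check but not the actual identification), and for $M_2$ it performs the same van Kampen argument on the decomposition $M_2=KI\cup J$ from Remark~\ref{rem:str}, with $y$ the M\"{o}bius-band fiber of $KI$ and $y=z^{b_2}$ imposed by the gluing annulus.
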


\begin{proof}
For $M_1$, it is a standard fact, see \cite{BZ}.
For $M_2$, 
we first have $\pi_1(KI)=\langle x, y : x^{-1}yx=y^{-1}\rangle$,
where $x^2$ (resp.~$y$) represents a regular fiber of $KI$
with the Seifert fibration over the disk (resp.~M\"{o}bius band).
As in Remark \ref{rem:str},
decompose $M_2$ into $KI$ and a solid torus $J$ along an annulus.
Then $\pi_1(M_2)=\langle x, y, z : x^{-1}yx=y^{-1}, y=z^{b_2}\rangle$, where
$z$ represents a core of $J$ (with a suitable orientation).
\end{proof}

\begin{proposition}\label{pro:pi1}
Let $M$ be the resulting manifold by $4b_2$-surgery on $K$.
Then the fundamental group $\pi_1(M)$ has a presentation
\[
\pi_1(M)=\langle x, y, z, a, b : x^{-1}yx=y^{-1}, y=z^{b_2}, a^2=b^{2b_1+1}, \mu=y, h=zx^2\rangle,
\]
where $\mu=b^{-b_1}a$ and $h=a^2=b^{2b_1+1}$.
\end{proposition}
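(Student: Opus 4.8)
The plan is to apply the Seifert--van Kampen theorem to the decomposition of $M$ induced by the Conway sphere $S$. Since $S$ meets the link of Figure \ref{fig:monte2} in four points, its preimage in the double branched cover is a single torus $\widetilde{S}$, and this torus $\widetilde{S}=\partial M_1=\partial M_2$ splits $M$ as $M=M_1\cup_{\widetilde{S}}M_2$. As $\widetilde{S}$ is a separating torus, van Kampen presents $\pi_1(M)$ as the pushout
\[
\pi_1(M)=\pi_1(M_1)\ast_{\pi_1(\widetilde{S})}\pi_1(M_2),
\]
whose presentation is the disjoint union of the generators and relators of $\pi_1(M_1)$ and $\pi_1(M_2)$ supplied by Lemma \ref{lem:each}, together with two relations identifying the images in $\pi_1(M_1)$ and $\pi_1(M_2)$ of a generating pair of $\pi_1(\widetilde{S})\cong\mathbb{Z}^2$.

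First I would fix, as a basis of $\pi_1(\widetilde{S})$, the lifts $\widetilde{\alpha}$ and $\widetilde{\beta}$ of the curves $\alpha$ and $\beta$ on $S$; since $\alpha$ and $\beta$ have distinct slopes on the four-punctured sphere, their lifts form such a basis. By Lemma \ref{lem:decompose}(1) their images in $\pi_1(M_1)$ are the meridian $\mu=b^{-b_1}a$ and the regular fiber $h=a^2=b^{2b_1+1}$, so it remains to compute their images in $\pi_1(M_2)$. For $\widetilde{\alpha}$ this is immediate: by Lemma \ref{lem:decompose}(2) the curve $\alpha$ lifts to a regular fiber of the Seifert fibration of $M_2$ over the M\"obius band, which is the element $y$ by Lemma \ref{lem:each}. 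This produces the gluing relation $\mu=y$.

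The main obstacle is the image of $\widetilde{\beta}$ in $\pi_1(M_2)$, which I expect to require the most bookkeeping, since Lemma \ref{lem:decompose} records only the image of $\alpha$ on the $M_2$ side. Here I would trace $\widetilde{\beta}$ through the decomposition $M_2=KI\cup_A J$ of Remark \ref{rem:str}, using that $x^2$ represents the regular fiber of the disk fibration of $KI$ and that $z$ represents the core of the solid torus $J$. Realizing $\widetilde{\beta}$ as an explicit curve on $\partial M_2$ and reading it off against this structure---keeping careful track of how the $|b_2|$-fold wrapping of $J$ along the $KI$-fiber affects a transverse section---I would show that $\widetilde{\beta}$ is homotopic to $zx^2$, giving the second gluing relation $h=zx^2$. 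Substituting the two gluing relations $\mu=y$ and $h=zx^2$ into the van Kampen presentation then yields exactly the asserted presentation of $\pi_1(M)$, completing the proof.
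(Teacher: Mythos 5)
Your framework is the right one, and it is in fact what the paper does implicitly: van Kampen applied to $M=M_1\cup_{\widetilde S}M_2$, with the two gluing relations read off by tracking the lifts of $\alpha$ and $\beta$; your derivation of $\mu=y$ is exactly the paper's (Lemma \ref{lem:decompose}(2) combined with Lemma \ref{lem:each}). But the proposal stops precisely where the real content of the proposition lies. The identity $\phi(h)=zx^2$ is not proven; it is announced (``I would show that $\widetilde\beta$ is homotopic to $zx^2$''), and everything else in the proposition is formal, so deferring this one computation means the proposition has not been established.

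Concretely, the missing idea is the following. The decomposition $M_2=KI\cup J$ of Remark \ref{rem:str} gives you little purchase on $\widetilde\beta$, because $\beta$ is defined downstairs as a curve on the Conway sphere, and nothing in that abstract splitting tells you where its lift sits relative to $KI$ and $J$; ``realizing $\widetilde\beta$ as an explicit curve on $\partial M_2$'' is itself the nontrivial step, and it must go through the tangle picture, not the abstract splitting. The paper does this by decomposing the tangle $\mathcal{T}_2$ further, so that $\beta$ bounds a sub-tangle whose double branched cover is a product $W=D_0\times S^1$, a pair of pants times $S^1$ (Figure \ref{fig:compo}), whose boundary consists of $\partial M_2$ together with tori $T_1$, $T_2$ along which the solid torus $J$ and $KI$ are attached. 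Then the lift of $\beta$ is the outer boundary $c_0$ of the pair of pants, the pair-of-pants relation gives $c_0=c_1c_2$, and the attachments identify $c_1$ with a longitude of $J$ (hence with $z$) and $c_2$ with the regular fiber of the disk fibration of $KI$ (hence with $x^2$), yielding $\phi(h)=zx^2$. Note also that the ``$|b_2|$-fold wrapping'' you propose to track carefully is a red herring for this relation: that wrapping is already recorded in the relation $y=z^{b_2}$ of Lemma \ref{lem:each}, and the gluing relation $h=zx^2$ is independent of $b_2$. (A minor additional point: having ``distinct slopes'' on the four-punctured sphere does not by itself guarantee that the lifts of $\alpha$ and $\beta$ form a basis of $\pi_1(\widetilde S)$ --- their lifts must intersect exactly once --- although for these particular curves this does hold, since $\mu$ and $h$ form a basis of $\pi_1(\partial M_1)$.)
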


\begin{proof}
Let $\phi:\partial M_1\to \partial M_2$ be the identification map.
By Lemma \ref{lem:decompose}, $\phi(\mu)=y$.
Thus it is sufficient to verify that $\phi(h)=zx^2$.

Let $D_0$ be a disk with two holes, and let $c_0$ be the outer boundary component, and
$c_1$, $c_2$ the inner boundary components.
Then set $W=D_0\times S^1$.
We identify $D_0$ with $D_0\times \{*\}\subset D_0\times S^1$.
See Figure \ref{fig:compo}, where $W$ is obtained as the double branched cover
of the left tangle. 

\begin{figure}[htbp]
\includegraphics[scale=0.7]{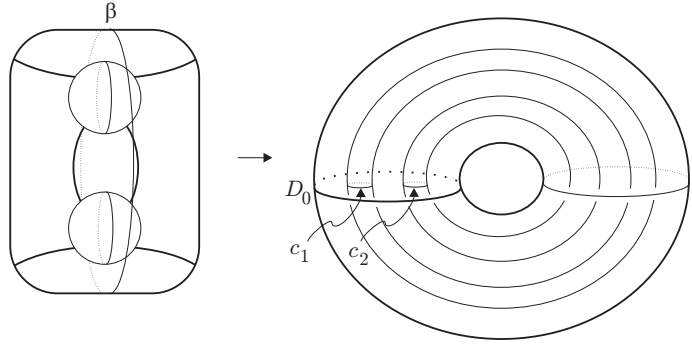}
\caption{$W=D_0\times S^1$}\label{fig:compo}
\end{figure}

Let $T_i=c_i\times S^1$.
Then $M_2$ is obtained from $W$ by attaching a solid torus $S^1\times D^2$ to $T_1$, $KI$ to $T_2$.
More precisely, $c_2$ 
is identified with a regular fiber of $KI$ with the Seifert fibration over
the disk.
Similarly, $c_1$ is identified with $S^1\times \{q\}\subset S^1\times \partial D^2$.

Since $c_0$ is a lift of the loop $\beta$ on $S$, 
$\phi(h)=c_0=c_1c_2$ with suitable orientations.
As above,
$c_1$ and $c_2$ correspond to $z$, $x^2$, respectively.
\end{proof}

%%%%%%%%%%%%%%%%%%%%%%%%%%%%%%%%%%%%%%%%%%%%%%%%%%%%%%%
\section{Left-orderings}

In this section, we prepare a few facts on left-orderings needed later.

Let $G$ be a left-orderable non-trivial group.
This means that $G$ admits a strict total ordering $<$ such that
$a<b$ implies $ga<gb$ for any $g\in G$. 
This is equivalent to the existence
of a \textit{positive cone\/} $P \ (\ne \varnothing)$, which is a semigroup and
gives a disjoint decomposition
$P\sqcup \{1\} \sqcup P^{-1}$.
For a given left-ordering $<$, the set
$P=\{g\in G\mid g>1\}$ gives a positive cone.
Any element of $P$ (resp.~$P^{-1}$) is said to be \textit{positive\/} (resp.~\textit{negative\/}).
Conversely, given a positive cone $P$,
declare $a<b$ if and only if $a^{-1}b\in P$.
This defines a left-ordering.

We denote by $\mathrm{LO}(G)$ the set of all positive cones in $G$.
This is regarded as the set of all left-orderings of $G$ as mentioned above.
For $g\in G$ and $P\in \mathrm{LO}(G)$, let $g(P)=gPg^{-1}$.
This gives a $G$-action on $\mathrm{LO}(G)$.
In other words,
for a left-ordering $<$ of $G$,
an element $g$ sends $<$ to a new left-ordering $<^g$ defined as follows:
$a<^g b$ if and only if $ag<bg$.
We say that $<$ and $<^g$ are \textit{conjugate\/} orderings.
Also, a family $L\subset \mathrm{LO}(G)$ is said to be \textit{normal\/} if it is $G$-invariant.

For $i=1,2$,
let $G_i$ be a left-orderable group and $H_i$ a subgroup of $G_i$, and
let $L_i\subset \mathrm{LO}(G_i)$ be a family of left-orderings.
Let $\phi:H_1\to H_2$ be an isomorphism.
We call that $\phi$ is \textit{compatible\/} for the pair $(L_1,L_2)$ if
for any $P_1\in L_1$, there exists $P_2\in L_2$ such that
$h_1\in P_1$ implies $\phi(h_1)\in P_2$ for any $h_1\in H_1$.

\begin{theorem}[Bludov-Glass \cite{BG}]\label{thm:BG}
For $i=1,2$,
let $G_i$ be a left-orderable group and $H_i$ a subgroup of $G_i$.
Let $\phi:H_1\to H_2$ be an isomorphism.
Then the free product with amalgamation $G_1*G_2\ (H_1\overset{\phi}{\cong} H_2)$
is left-orderable if and only if there exist normal families $L_i\subset \mathrm{LO}(G_i)$
for $i=1,2$ such that $\phi$ is compatible for $(L_1,L_2)$.
\end{theorem}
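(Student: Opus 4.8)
The plan is to prove the two implications separately, with the forward (necessity) direction being routine and the converse carrying all the weight. For necessity, suppose $G=G_1*_\phi G_2$ carries a positive cone $P$. For each factor set
\[
L_i=\{\,(gPg^{-1})\cap G_i : g\in G\,\},
\]
noting that intersecting a positive cone of $G$ with a subgroup yields a positive cone of that subgroup, so $L_i\subset\mathrm{LO}(G_i)$. Each $L_i$ is normal: for $g_i\in G_i$ we have $g_i\bigl((gPg^{-1})\cap G_i\bigr)g_i^{-1}=\bigl((g_ig)P(g_ig)^{-1}\bigr)\cap G_i\in L_i$, since $g_i$ normalizes $G_i$. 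Compatibility is then immediate from the amalgamation: given $P_1=(gPg^{-1})\cap G_1\in L_1$, take $P_2=(gPg^{-1})\cap G_2\in L_2$; if $h_1\in H_1$ is $P_1$-positive then $h_1\in gPg^{-1}$, and since $\phi(h_1)$ is the very same element of $G$ lying in $H_2\subset G_2$, we get $\phi(h_1)\in P_2$.

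For the converse I would construct a positive cone on $G$ directly from the data, generalizing the lexicographic ordering of a free product (Vinogradov's theorem) to the amalgamated setting. Fix compatible $P_1\in L_1$ and $P_2\in L_2$. Given $g\in G$, write it in reduced amalgamated normal form as an alternating product of coset representatives for the two factors followed by an element of $H$, and determine the sign of $g$ by reading the syllables from left to right. At each syllable I would use \emph{normality} to replace the relevant factor cone $P_i$ by its conjugate by the accumulated prefix, so that the local comparison is performed in the correctly transported ordering, which by normality still lies in $L_i$; at each junction lying in $H$ I would use \emph{compatibility} to match the two factor orderings, so that the decision does not depend on which side of the amalgam is consulted.

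The substance lies entirely in verifying that the resulting set $P$ is a positive cone, that is, that $G=P\sqcup\{1\}\sqcup P^{-1}$ and, above all, that $P$ is closed under multiplication. Trichotomy follows readily once the sign rule is seen to be unambiguous and to reverse under taking inverses, but multiplicativity is the main obstacle: when two reduced words are concatenated the middle syllables may cancel, collapsing pairs of syllables and driving $H$-elements across junctions, and one must show that the sign of the product is nonetheless governed by the first surviving syllable. Normality is exactly what guarantees that the transported factor orders remain legitimate after such conjugations, while compatibility is exactly what prevents a sign contradiction when two syllables merge inside $H$; organizing this into an induction on syllable length that controls the cancellation is the technical heart of the argument. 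A less constructive alternative would be to invoke the finite-subset criterion for left-orderability and argue by contradiction, showing that a hypothetical positive relation among finitely many elements, pushed into the factors, would violate compatibility, thereby trading the explicit cone for a cleaner compactness-style argument.
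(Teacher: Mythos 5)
First, a structural point: the paper does not prove this statement at all. Theorem \ref{thm:BG} is imported as a black box from Bludov and Glass \cite{BG}, and the paper simply applies it in the proof of Theorem \ref{thm:main}. So there is no in-paper proof to compare against, and your proposal must stand on its own. Its necessity half does: taking $L_i=\{(gPg^{-1})\cap G_i : g\in G\}$, your verification that each $L_i$ is a normal family of positive cones and that $\phi$ is compatible for $(L_1,L_2)$ is correct and complete, the key observation being that $h_1$ and $\phi(h_1)$ are the same element of the amalgam (together with the standard fact that $G_1$ and $G_2$ embed in it).

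The sufficiency half, however, is where the entire content of the theorem lives, and there your proposal is a plan rather than a proof: the step you defer --- well-definedness and multiplicativity of the proposed cone under concatenation and cancellation of normal forms --- is precisely the hard part of Bludov--Glass's long argument, not a routine verification. Two concrete obstructions to the scheme as you describe it. (a) Normal forms in an amalgamated product are unique only up to sliding elements of the amalgamated subgroup across junctions ($g_i\mapsto g_ih$, $g_{i+1}\mapsto h^{-1}g_{i+1}$), so ``the sign of the first syllable'' is not well defined until one proves invariance of the sign rule under this ambiguity; you mention unambiguity only in passing. (b) Normality of $L_i$ gives invariance under conjugation by elements of $G_i$ only; the ``accumulated prefix'' by which you propose to conjugate the factor cone generally lies outside $G_i$, so ``its conjugate by the prefix'' is not an element of $L_i$, and the transport scheme as stated does not parse --- what is needed is an alternating bookkeeping that interleaves compatibility (to pass between $L_1$ and $L_2$ at each junction) with normality (to conjugate within a factor), and making this coherent is the theorem. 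Finally, note that fixing a single compatible pair $(P_1,P_2)$ at the outset cannot suffice even in principle: the existence of one compatible pair of left-orderings is strictly weaker than the existence of compatible normal families (there are amalgams of left-orderable groups over $\mathbb{Z}^2$, e.g.\ fundamental groups of certain non-left-orderable graph and flat manifolds obtained by gluing two copies of the twisted $I$-bundle over the Klein bottle, which admit compatible pairs of orderings but are not left-orderable). So any correct proof must make essential, quantified use of the whole families, which your sketch gestures at but does not carry out. The alternative compactness-style argument you mention is likewise only a gesture. In short: forward direction correct, converse direction a genuine gap.
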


The next is well known.

\begin{lemma}\label{lem:extension}
Consider a short exact sequence of groups
\begin{equation}\label{eq:seq}
1\to K \to G \overset{\pi}{\to} H\to 1.
\end{equation}
Suppose $K$ and $H$ are left-orderable, with left-orderings $<_H$ and $<_K$, respectively.
For $g\in G$, declare that $1<g$ if $\pi(g)\ne 1$ and $1<_H \pi(g)$, or
if $\pi(g)=1$ and $1<_K g$.
Then this defines a left-ordering of $G$.
\end{lemma}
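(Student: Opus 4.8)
The plan is to reduce the claim to the positive-cone characterization of left-orderings recalled at the start of this section. Writing $<_H$ and $<_K$ for the given orderings, set
\[
P=\{g\in G: \pi(g)\neq 1,\ 1<_H\pi(g)\}\cup\{g\in K: g\neq 1,\ 1<_K g\},
\]
so that $P=\{g\in G: 1<g\}$ for the relation declared in the statement (recall that $\pi(g)=1$ exactly when $g\in K$). Since a left-ordering is equivalent to a positive cone, and since the ordering induced by a positive cone, namely $a<b\iff a^{-1}b\in P$, is automatically invariant under left multiplication, it suffices to check that $P$ is a positive cone, i.e.\ that $P$ is a subsemigroup and that $G=P\sqcup\{1\}\sqcup P^{-1}$.

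First I would verify the disjoint decomposition by splitting on whether $g\in K$. If $g\notin K$ then $\pi(g)\neq 1$, and since $\pi$ is a homomorphism we have $\pi(g^{-1})=\pi(g)^{-1}$; totality of $<_H$ then forces exactly one of $1<_H\pi(g)$ and $1<_H\pi(g^{-1})$ to hold, so exactly one of $g,g^{-1}$ lies in $P$. If $g\in K$ with $g\neq 1$, the same argument with $<_K$ in place of $<_H$ applies, while $1\notin P$ by construction. Hence every element of $G$ lies in exactly one of $P$, $\{1\}$, $P^{-1}$.

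Next I would check that $P$ is closed under multiplication, distinguishing the four cases according to whether each of $g_1,g_2\in P$ lies in $K$. Whenever at least one factor lies outside $K$, the image $\pi(g_1g_2)=\pi(g_1)\pi(g_2)$ is controlled entirely by the $H$-component: using that the positive elements of $H$ form a semigroup (for the case $g_1,g_2\notin K$) and that $\pi$ kills the $K$-factor (for the two mixed cases), one obtains $1<_H\pi(g_1g_2)$, so $g_1g_2\in P$. When both $g_1,g_2\in K$, the product remains in $K$ and $1<_K g_1g_2$ follows because the positive elements of $K$ form a semigroup; thus $g_1g_2\in P$ in this case as well.

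This is the standard lexicographic ordering of a group extension, and the verification is entirely routine, so I do not anticipate a genuine obstacle. The only point worth flagging is that $<_K$ need bear no relation to the conjugation action of $G$ on $K$: this causes no trouble precisely because $<_K$ is invoked only for products of two elements of $K$, where left-invariance of $<_K$ within $K$ is all that is used, while the $H$-component dominates the comparison as soon as any factor leaves $K$.
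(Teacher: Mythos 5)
Your proof is correct and complete: the paper states this lemma without proof (introducing it only with ``The next is well known''), and your positive-cone verification is exactly the standard argument that justifies it. The case analysis is exhaustive, the reduction to the semigroup and trichotomy properties of a positive cone matches the characterization recalled in the paper's Section 4, and your closing remark correctly identifies why no compatibility between $<_K$ and the conjugation action of $G$ on $K$ is needed (that hypothesis would only be required for the stronger conclusion that $<_K$ extends to a $K$-cofinal or normal-family-friendly ordering, which the lemma does not claim).
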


Suppose that we have a short exact sequence as (\ref{eq:seq}), where
$H$ is torsion-free and abelian.
Let $A$ be a subgroup of $G$ that is isomorphic to $\mathbb{Z}^2$.
We assume that $A\cap K=\langle x\rangle$ is an infinite cyclic group.
Since $H$ is torsion-free, the element $x$ is primitive in $A$, so we can choose
another element $y$ so that $\{x,y\}$ forms a basis of $A$.

Define two left-orderings $<_A$ and $<'_A$ of $A$ as follows:
\begin{itemize}
\item[(1)] Given $x^ry^s\in A$, $1<_{A}x^ry^s$ if $s>0$, else $s=0$ and $r>0$.
\item[(2)] Given $x^ry^s\in A$, $1<'_{A}x^ry^s$ if $s>0$, else $s=0$ and $r<0$.
\end{itemize}

\begin{lemma}\label{lem:restrict}
With notation as above, there exists a normal family $L\subset \mathrm{LO}(G)$
of left-orderings such that every left-ordering of $L$ restricts
to either $<_A$ or $<'_A$ on the subgroup $A$.
\end{lemma}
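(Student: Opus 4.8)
The plan is to build one left-ordering of $G$ via the extension recipe of Lemma \ref{lem:extension} and then take its full conjugacy orbit as the family $L$; normality is then automatic, and all the real work is to check that conjugation can only interchange $<_A$ and $<'_A$. Since $H$ is torsion-free abelian it is bi-orderable, so I would first fix a left-ordering $<_H$ of $H$ with $1<_H\pi(y)$ (possible after reversing the ordering if necessary, using that $\pi(y)\neq 1$: indeed $y\notin\langle x\rangle=A\cap K$ forces $\pi(y)$ to have infinite order in the torsion-free group $H$). Fixing also a left-ordering $<_K$ of $K$, Lemma \ref{lem:extension} produces a left-ordering $<$ of $G$ with positive cone $P$, and I would set $L=\{gPg^{-1}:g\in G\}$. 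Being a $G$-orbit for the conjugation action, $L$ is normal by construction.

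The heart of the matter is to compute, for each $g$, the restriction of the conjugate ordering $<^g$ to $A$. By definition $a\in A$ is $<^g$-positive exactly when $g^{-1}ag\in P$, and the decisive observation is that abelianness of $H$ gives $\pi(g^{-1}ag)=\pi(a)$; for $a=x^ry^s$ this says $\pi(g^{-1}ag)=\pi(y)^s$. I would then split into cases according to the Lemma \ref{lem:extension} description of $P$. When $s\neq 0$ we have $\pi(y)^s\neq 1$, so $a$ is positive iff $1<_H\pi(y)^s$, i.e.\ iff $s>0$, with no dependence on $r$ or $g$. When $s=0$ the element $a=x^r$ lies in $K$ and is positive iff $1<_K (g^{-1}xg)^r$, whose truth value depends only on the sign of $r$ and on the fixed sign of $g^{-1}xg$ in $<_K$ (which is well defined and nonzero because $g^{-1}xg$, being conjugate to the infinite-order element $x$, is itself of infinite order, and the powers $(g^{-1}xg)^r$ track the sign of $r$). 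Collating the two cases shows that the restriction of $<^g$ to $A$ is exactly $<_A$ when $1<_K g^{-1}xg$ and exactly $<'_A$ when $g^{-1}xg<_K 1$, which is the assertion.

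The main obstacle, and the place where the hypotheses genuinely enter, is controlling what conjugation does to the restriction: a priori $g^{-1}Ag$ need not meet $A$, so one cannot reason inside $A$ alone and must compute signs of $g^{-1}ag$ in the full ordering of $G$. The saving grace is precisely that $H$ is abelian. This pins the dominant $y$-coordinate, since $\pi(g^{-1}ag)=\pi(a)$ means conjugation can never flip the sign attached to $s$; the most it can do is flip the sign attached to $r$, because $K$ is normal and so $g^{-1}xg\in K$ may be $<_K$-negative even though $x$ is positive. But flipping the $r$-sign is exactly the passage from $<_A$ to $<'_A$, so the two-element target $\{<_A,<'_A\}$ is preserved under the whole $G$-action. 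I would close by remarking that $\mathrm{LO}(G)\neq\varnothing$ (so $L\neq\varnothing$), since $G$ is itself left-orderable by Lemma \ref{lem:extension}.
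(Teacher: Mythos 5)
Your proof is correct and follows essentially the same route as the paper: construct one extension ordering via Lemma \ref{lem:extension} with $1<_H\pi(y)$ and arbitrary $<_K$, take its conjugacy orbit as $L$, and use abelianness of $H$ (plus $x\in K$) to show conjugation fixes the $s$-sign and can only flip the $r$-sign via the $<_K$-sign of $g^{-1}xg$. The only difference is that you spell out details the paper leaves implicit, such as why $\pi(y)$ has infinite order and why powers of $g^{-1}xg$ track the sign of $r$.
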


\begin{proof}
Choose a left-ordering $<_H$ of $H$ such that $1<_H \pi(y)$, and let
$<_K$ be an arbitrary left-ordering of $K$.
Construct a left-ordering $<$ of $G$ as in Lemma \ref{lem:extension},
using $<_H$ and $<_K$.
Then let $L\subset \mathrm{LO}(G)$ be the set of all conjugates of this ordering.
By construction, $L$ is normal.

Let $g\in G$ be an arbitrary element.
For $x^ry^s\in A$ with $s\ne 0$,
we have
\[
1<^g x^ry^s \Longleftrightarrow 1<g^{-1}x^ry^sg \Longleftrightarrow 1<_H \pi(g^{-1}x^ry^sg)=\pi(y)^s,
\]
since $H$ is abelian and $x\in K$.
From the choice of $<_H$, this happens only when $s>0$.
Thus $<^g$ restricts to $<_A$ or $<'_A$ on $A$, according as
$1<_K g^{-1}xg$ or $g^{-1}xg<_K 1$.
\end{proof}

\begin{remark}\label{rem:both}
The normal family $L$ obtained in Lemma \ref{lem:restrict}
contains both a left-ordering which restricts to $<_A$ on $A$ and
one which restricts to $<'_A$. 
For, if we have one, then the other is obtained by switching
the positive cone and negative cone.
\end{remark}

%%%%%%%%%%%%%%%%%%%%%%%%%%%%%%%%%%%%%%%%%%%%%%%
\section{Proof of Theorem \ref{thm:main}}

Let $G_1=\pi_1(M_1)=\langle a, b : a^2=b^{2b_1+1}\rangle$, with
a meridian $\mu=b^{-b_1}a$ and a regular fiber $h=a^2=b^{2b_1+1}$.
Then
\begin{eqnarray*}
G_{1} &=&\langle a,b,c : a^2=b^{2b_1+1}, c=ba^{-1}\rangle \\
    &=&\langle b,c : b=cb^{2b_1}c\rangle.
\end{eqnarray*}

Thus this is $\Gamma_{2b_1}$ in Navas's notation \cite{Na}.
Hence, we can assign Navas's left-ordering to $G_1$.

In \cite{T}, we show that

\begin{lemma}\label{lem:navas}
Let $<^g$ be a conjugate ordering of Navas's left-ordering $<$ of $G_1$. 
Assume $1<^g\mu^rh^s$. Then, 
\begin{itemize}
\item[(i)] $s>0$\textup{;} or
\item[(ii)] $s=0$ and 
$r>0$ \textup{(}resp.~$r<0$\textup{)} if $g^{-1}\mu g>1$ 
\textup{(}resp.~$g^{-1}\mu g<1$\textup{)}.
\end{itemize}
\end{lemma}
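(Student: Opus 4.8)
The plan is to realize Navas's ordering dynamically. The element $h=a^2=b^{2b_1+1}$ generates the center of $G_1$, and the quotient $Q=G_1/\langle h\rangle\cong\mathbb{Z}/2*\mathbb{Z}/(2b_1+1)$ is a Fuchsian group; thus $G_1$ is a central extension $1\to\langle h\rangle\to G_1\to Q\to 1$ that acts faithfully on $\mathbb{R}$ by orientation-preserving homeomorphisms, lifting the action of $Q$ on the circle $S^1=\mathbb{R}/\mathbb{Z}$. In this picture $h$ acts as the unit translation $t\mapsto t+1$, and Navas's ordering $<$ is the dynamical ordering attached to a basepoint $t_0$: an element is positive precisely when it moves $t_0$ to the right (with the usual tie-breaking convention for elements fixing $t_0$). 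First I would record this description together with the normalization that $h$ realizes the unit translation.

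Next I would reformulate the hypothesis. Put $\mu_g=g^{-1}\mu g$ and let $f_{\mu_g}$ denote the homeomorphism of $\mathbb{R}$ it represents. Since $h$ is central, $g^{-1}\mu^r h^s g=\mu_g^{\,r}h^s$, and its homeomorphism sends $t_0$ to $f_{\mu_g}^{\,r}(t_0)+s$, because $f_{\mu_g}$ commutes with the unit translation. Hence $1<^g\mu^r h^s$ is equivalent to $f_{\mu_g}^{\,r}(t_0)-t_0>-s$. When $s=0$ this reduces to $1<\mu_g^{\,r}$, and the elementary fact that $w^r>1$ forces $r>0$ if $w>1$ and $r<0$ if $w<1$ (applied to $w=\mu_g\neq 1$) gives exactly conclusion (ii).

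The substance of the lemma is the case $s\neq 0$, where the decisive input is that the orbits of $\mu_g$ are bounded. The word $\bar\mu=\bar b^{-b_1}\bar a$ is cyclically reduced of syllable length two, so it has infinite order in $Q$ and its image in the Fuchsian group is hyperbolic or parabolic; in either case the associated circle map has rotation number zero and at least one fixed point. Since rotation number and the existence of fixed points are preserved under conjugation, $f_{\mu_g}$ likewise fixes points of $\mathbb{R}$ and traps the orbit of $t_0$ inside the open interval between two consecutive ones. As these are the lifts of the one or two fixed points of the circle map, consecutive ones are at most one period apart, whence $|f_{\mu_g}^{\,r}(t_0)-t_0|<1$ for every $r\in\mathbb{Z}$. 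Comparing with the reformulation, $f_{\mu_g}^{\,r}(t_0)-t_0>-s$ can hold only if $s\geq 1$ and must fail if $s\leq -1$; this is precisely conclusion (i).

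I expect the main obstacle to be the uniform displacement bound $|f_{\mu_g}^{\,r}(t_0)-t_0|<1$, which must hold simultaneously for all exponents $r$ and all conjugating elements $g$. Establishing it rests on two facts that need to be pinned down with care: that the meridian projects to an infinite-order element of the Fuchsian quotient (so that its lift has rotation number zero and genuine fixed points rather than acting freely), and that Navas's ordering is normalized so that the central fiber $h$ acts as the unit translation (so that ``at most one fixed point per period'' confines the orbit to a single fundamental domain). With these two points secured, the displacement bound, and hence the full trichotomy, follows.
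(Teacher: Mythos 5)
The first thing to note is that the paper does not prove this lemma at all: it is imported verbatim from \cite{T}, so your argument has to be judged on its own terms. Your general framing is sound: $G_1$ is indeed isomorphic to the preimage of the Hecke group $\mathbb{Z}_2 * \mathbb{Z}_{2b_1+1}$ in $\widetilde{\mathrm{PSL}}(2,\mathbb{R})$, with $h$ corresponding to the unit translation; your treatment of case (ii) is correct (and holds in any left-ordering); and reducing case (i) to the uniform displacement bound $|f_{\mu_g}^{\,r}(t_0)-t_0|<1$ is the right idea --- that bound is exactly the statement $h^{-1}<(g^{-1}\mu g)^r<h$ for all $r$ and $g$, which is what the lemma amounts to.

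The genuine gap is the step ``since rotation number and the existence of fixed points are preserved under conjugation, $f_{\mu_g}$ likewise fixes points of $\mathbb{R}$.'' The element $\mu$ is one particular \emph{lift} of the circle map induced by $\bar\mu$, and a lift of a circle homeomorphism with fixed points has fixed points only if it is the canonical lift; the other lifts are the canonical one composed with $h^k$, $k\neq 0$, and these act freely on $\mathbb{R}$ with integer translation number $k$. Nothing in your argument identifies which lift $\mu=b^{-b_1}a$ is: every input you use (the syllable structure of $\bar\mu$, its infinite order, its fixed points on $S^1$, the normalization $h=$ unit translation) is unchanged if $\mu$ is replaced by $\mu h$, since both project to the same element $\bar\mu$ of $Q$. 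But the lemma cannot hold for both: taking $g=1$ and applying the statement to $\mu^{-2}h^{-1}$ (which satisfies neither (i) nor (ii)) forces $\mu^{2}h>1$, whereas the same statement with $\mu$ replaced by $\mu h$, applied to $(\mu h)^{2}h^{-1}=\mu^{2}h$, forces $\mu^{2}h<1$. So no argument that only sees the image $\bar\mu$ in the quotient can prove the lemma; one must in addition prove that the translation number of the specific element $b^{-b_1}a$ is zero, i.e.\ that it is the fixed-point lift, and that is essentially the entire content of part (i). Your closing paragraph does flag this point, but the justification offered there --- ``the meridian projects to an infinite-order element, so its lift has rotation number zero and genuine fixed points'' --- is a non sequitur: infinite order of the projection only gives that the lift's translation number is an \emph{integer}, not that it is $0$. (Two further items would also need proof rather than assertion: that Navas's ordering really is the dynamical ordering of this realization based at $t_0$; and note that in the realization forced by $b^{2b_1+1}=h$, the generator $\bar b$ acts with rotation number $1/(2b_1+1)$, which makes $\bar\mu$ hyperbolic rather than parabolic as soon as $b_1>1$, so one cannot shortcut the lift computation by appealing to the meridian being a cusp class.)
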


%%%%

Next, we will examine $G_2=\pi_1(M_2)=\langle x, y, z : x^{-1}yx=y^{-1}, y=z^{b_2}\rangle$.
Since $G_2$ is the fundamental group of an irreducible $3$-manifold with toroidal boundary,
it is left-orderable \cite{BRW}.
Let $\pi:G_2\to \mathbb{Z}$ be a homomorphism defined by
$\pi(x)=1$, $\pi(y)=\pi(z)=0$.
Thus we have a short exact sequence
\[
1\to K \to G_2 \overset{\pi}{\to} \mathbb{Z} \to 1.
\]

Let $A$ be a rank two free abelian group generated by $\{y,zx^2\}$.
In fact, $A=\pi_1(\partial M_2)$.
Then $A\cap K=\langle y\rangle$. 
Hence by Lemma \ref{lem:restrict}, we have a normal family $L\subset \mathrm{LO}(G_2)$ 
such that any left-ordering in $L$ restricts to $<_A$ or $<'A$ on $A$, which are defined as follows:
\begin{itemize}
\item[(1)] Given $y^r(zx^2)^s\in A$, $1<_{A}y^r(zx^2)^s$ if $s>0$, else $s=0$ and $r>0$.
\item[(2)] Given $y^r(zx^2)^s\in A$, $1<'_{A}y^r(zx^2)^s$ if $s>0$, else $s=0$ and $r<0$.
\end{itemize}

%%%%%%%%%%%%%%%%%%%%%%%%%%%

\begin{proof}[Proof of Theorem \ref{thm:main}]
Let $L_1\subset \mathrm{LO}(G_1)$ be the set of all conjugate orderings of Navas's left-ordering of $G_1$.
This is normal by definition.
Let $L_2\subset \mathrm{LO}(G_2)$ be the normal family given above.

Recall that the identification map $\phi:\partial M_1\to \partial M_2$
is given by
$\phi(\mu)=y$ and $\phi(h)=zx^2$.
To show that $\pi_1(M)$ is left-orderable,
it is sufficient to verify that $\phi$ is compatible for the pair $(L_1,L_2)$
by Theorem \ref{thm:BG}.

For a left-ordering $<^g\in L_1$,
suppose $1<^g \mu^rh^s$.
If $1<^g \mu$, then 
$s>0$, or $s=0$ and $r>0$ by Lemma \ref{lem:navas}.
Since $\phi(\mu^rh^s)=y^r(zx^2)^s$, we choose a left-ordering in $L_2$, which
restricts to $<_A$ on $A$.
Similarly, if $\mu<^g 1$, then choose a left-ordering in $L_2$, which
restricts to $<'_A$ on $A$.
This shows that $\phi$ is compatible for $(L_1,L_2)$.
\end{proof}

%%%%%%%%%%%%%%%%%%%%%%%%%%%%%%%%%%
\bibliographystyle{amsplain}

\begin{thebibliography}{BGW}
\bibitem{BG}
V. V. Bludov and A. M. W. Glass,
\textit{Word problems, embeddings, and free products of right-ordered groups with amalgamated subgroup},
Proc. Lond. Math. Soc. (3) \textbf{99} (2009), 585--608.

\bibitem{BGW}
S. Boyer, C. McA. Gordon and L. Watson,
\textit{On $L$-spaces and left-orderable fundamental groups},
preprint, \texttt{arXiv:1107.5016}.

\bibitem{BRW}
S. Boyer, D. Rolfsen and B. Wiest,
\textit{Orderable 3-manifold groups},
Ann. Inst. Fourier (Grenoble) \textbf{55} (2005), 243--288.

\bibitem{BW}
M. Brittenham and Y. Q. Wu,
\textit{The classification of exceptional Dehn surgeries on 2-bridge knots},
Comm. Anal. Geom. \textbf{9} (2001), 97--113.

\bibitem{BZ}
G. Burde and H. Zieschang,
\textit{Knots}, de Gruyter Studies in Mathematics, 5. Walter de Gruyter \& Co., Berlin, 2003.

%\bibitem{CLW}
%A. Clay, T. Lidman and L. Watson,
%\textit{Graph manifolds, left-orderability and amalgamation},
%preprint, \texttt{arXiv:1106.0486}.

\bibitem{Ga}
D. Gabai,
\textit{The Murasugi sum is a natural geometric operation},
Low-dimensional topology (San Francisco, Calif., 1981), 131--143, 
Contemp. Math., 20, Amer. Math. Soc., Providence, RI, 1983. 

\bibitem{Ga2}
D. Gabai,
\textit{Foliations and the topology of 3-manifolds. III},
J. Differential Geom. \textbf{26} (1987), 479--536.

\bibitem{Go}
R.E. Goodrick,
\textit{Two bridge knots are alternating knots},
Pacific J. Math. (3) \textbf{40} (1972), 561--564.

\bibitem{G}
J. Greene,
\textit{Alternating links and left-orderability},
preprint, \texttt{arXiv:1107.5232}. 

\bibitem{HT}
A. Hatcher and W. Thurston, 
\textit{Incompressible surfaces in 2-bridge knot complements},
Invent. Math. \textbf{79} (1985), 225--246. 

\bibitem{I}
T. Ito,
\textit{Non-left-orderable double branched coverings},
preprint, \texttt{arXiv:1106.1499}. 

\bibitem{M}
J. M. Montesinos,
\textit{Surgery on links and double branched covers of $S^{3}$},
Knots, groups, and 3-manifolds (Papers dedicated to the memory of R. H. Fox), pp. 227--259.
Ann. of Math. Studies, No. 84, Princeton Univ. Press, Princeton, N.J., 1975. 

\bibitem{Na}
A. Navas,
\textit{A remarkable family of left-ordered groups: central extensions of Hecke groups},
J. Algebra \textbf{328} (2011), 31--42,

\bibitem{N}
Y. Ni,
\textit{Knot Floer homology detects fibred knots},
Invent. Math. \textbf{170} (2007), 577--608.

\bibitem{OS}
P. Ozsv\'{a}th and Z. Szab\'{o},
\textit{On knot Floer homology and lens space surgeries},
Topology \textbf{44} (2005), 1281--1300. 

\bibitem{T}
M. Teragaito,
\textit{Left-orderability and exceptional Dehn surgery on twist knots},
preprint, \texttt{arXiv:1109.2965}.

\end{thebibliography}

\end{document}